\documentclass[11pt]{article}
\usepackage[utf8]{inputenc}
\usepackage[T1]{fontenc}
\usepackage{amsmath,amssymb,amsthm}
\usepackage{enumitem}
\usepackage{cite}

\newcommand{\sr}{\hat r}
\newcommand{\sri}{\hat r_{\infty}}
\newcommand{\core}{\operatorname{core}}

\newtheorem{theorem}{Theorem}[section]
\newtheorem{proposition}[theorem]{Proposition}
\newtheorem{lemma}[theorem]{Lemma}
\newtheorem{corollary}[theorem]{Corollary}

\theoremstyle{definition}
\newtheorem{problem}[theorem]{Problem}

\theoremstyle{remark}
\newtheorem{remark}[theorem]{Remark}

\newcounter{case}
\newenvironment{case}{\refstepcounter{case}\par\medskip\noindent\textbf{Case \arabic{case}. }}{\par\medskip}

\begin{document}

\title{\textbf{The Erd\H{o}s--Faudree Problems and the Isolate-Free Core\footnote{Supported by the National Science Foundation of China
        (No.~12471329 and 12061059).}}}

\author{Yaping Mao\footnote{Academy of Plateau
Science and Sustainability, and School of
Mathematics and Statistis, Qinghai Normal University, Xining,
Qinghai 810008, China. {\tt yapingmao@outlook.com; myp@qhnu.edu.cn}}}

\date{}
\maketitle

\begin{abstract}
In 1981, Erd\H{o}s and Faudree asked whether there exists an infinite family of graphs
$G_N$ on $N$ vertices with $\Delta(G_N)<N-1$ and $\sri(G_N)=1$, and whether every
family with $|V(G_N)|=N$ and $\Delta(G_N)<c$ for some fixed constant $c$ must satisfy
$\sri(G_N)\to 0$. We show first that the literal forms of the two questions are controlled
entirely by isolated vertices: for every nonempty graph $G$, the whole sequence
$\bigl(\sr(tK_2,G)\bigr)_{t\ge 1}$ depends only on the isolate-free core $\core(G)$.
Consequently, Problem~1 has a positive answer and Problem~2 has a negative answer in
exactly their original form.
We then turn to the genuine content behind the two problems. For Problem~1 we study
connected graphs and prove a complete limit theorem: for every $\alpha\in[0,1]$ there
exists a family of connected bipartite graphs $G_N$ with $|V(G_N)|=N$ and
$\sri(G_N)\to\alpha$; in particular there are connected graphs with $\Delta(G_N)=N-2$ and
$\sri(G_N)\to 1$. For Problem~2 we prove a strengthened positive statement: if
$\Delta(G_N)<c$ for a fixed constant $c$ and the isolate-free core of $G_N$ has order tending
to infinity, then $\sri(G_N)\to 0$. In particular every connected bounded-degree family
satisfies $\sri(G_N)\to 0$. Thus the original Erd\H{o}s--Faudree questions are resolved in
their literal form, and the mechanism behind their connected and disconnected behavior is
identified precisely.\\[2mm]
{\bf Keywords:} Ramsey Theory; Size Ramsey Number; Forest.\\[2mm]
{\bf AMS subject classification 2010:} 05C55; 05D10.
\end{abstract}


\section{Introduction}

All graphs in this paper are finite, simple, and undirected. A graph is
\emph{nonempty} if it has at least one edge. For graphs $F,G,H$ we write
$F\to(G,H)$
to mean that every red--blue coloring of $E(F)$ contains either a red copy of $G$
or a blue copy of $H$. The \emph{size Ramsey number} of $(G,H)$ is
\[
\sr(G,H):=\min\{|E(F)|:F\to(G,H)\}.
\]
The notion was introduced by Erd\H{o}s, Faudree, Rousseau, and Schelp
\cite{EFRS78}. For more details, we refer to a survey paper
\cite{FaSh02} by Faudree and Schelp and some papers \cite{Beck83,
Bielak87, CJK18, DJKR21,EFRS78,ErdosFaudree84, FaShe83, FaSh83, KohayakawaRetterRodl, LM98, LuWang}.

For a graph $G$, we write $V(G)$ and $E(G)$ for its vertex and edge sets, $\Delta(G)$
for its maximum degree, and $G\cup H$ for the disjoint union of graphs $G$ and $H$. In
particular, $tK_2$ denotes a matching of size $t$ and $sK_1$ denotes the edgeless graph
on $s$ vertices. For a nonempty graph $G$, its \emph{isolate-free core} $\core(G)$ is
obtained by deleting all isolated vertices of $G$.

In the special case of a matching versus a fixed graph, Erd\H{o}s and Faudree proved
more generally that for fixed nontrivial graphs $H$ and $G$ the limit
\[
\lim_{t\to\infty}\frac{\sr(tH,G)}{t\,\sr(H,G)}
\]
exists \cite{ErdosFaudree84}. They then studied the special case $H=K_2$ and wrote
\[
\sri(G)=\lim_{t\to\infty}\frac{\sr(tK_2,G)}{t\,\sr(K_2,G)}.
\]
Since $\sr(K_2,G)=|E(G)|$, one may write
\[
\sri(G)=\lim_{t\to\infty}\frac{\sr(tK_2,G)}{t|E(G)|}.
\]
In our setting the existence of this limit will also follow from a direct subadditivity
argument together with Fekete's lemma.

Erd\H{o}s and Faudree  \cite{ErdosFaudree84} proved that $\sri(G)=1$ for stars, for graphs of the form
$K_{1,d}+e$, and for every connected graph on at most four vertices other than $P_4$. 
Javadi and Omidi 
\cite{JavadiOmidi2018} obtained the exact value of $\hat{R}(K_n, tK_2)$ for every pair of positive integers $n, t$, and as a byproduct, we give an affirmative answer to the question of Erd\H{o}s and Faudree.

Erd\H{o}s and Faudree asked the following two problems.
\begin{problem}[Erd\H{o}s--Faudree]\label{prob:EF1}
Is there an infinite family of graphs $\{G_N:N>1\}$ with
\[
|V(G_N)|=N,\qquad \Delta(G_N)<N-1,
\]
such that
$\sri(G_N)=1$
for every $N$?
\end{problem}

\begin{problem}[Erd\H{o}s--Faudree]\label{prob:EF2}
If $\{G_N:N>1\}$ is a family of graphs with $|V(G_N)|=N$ and
$\Delta(G_N)<c$
for some fixed constant $c$, must one have
\[
\lim_{N\to\infty}\sri(G_N)=0?
\]
\end{problem}

At first sight the two questions point in opposite directions. Problem~\ref{prob:EF1}
asks whether the limiting ratio can remain as large as possible, while
Problem~\ref{prob:EF2} asks whether bounded degree forces it to vanish. Yet the two
questions share the same hidden obstruction: isolated vertices are invisible to all matching
size Ramsey numbers. Indeed,
\[
G_N=K_2\cup (N-2)K_1
\]
already satisfies $|V(G_N)|=N$, $\Delta(G_N)=1<N-1$, and $\sri(G_N)=1$ for every
$N\ge 3$, so it solves Problem~\ref{prob:EF1} and simultaneously refutes
Problem~\ref{prob:EF2} for every fixed $c>1$.

The point of this paper is that this phenomenon is not accidental. It is a manifestation of a
general structural principle: for every nonempty graph $G$, the whole sequence
$\bigl(\sr(tK_2,G)\bigr)_{t\ge 1}$ depends only on the isolate-free core $\core(G)$,
obtained from $G$ by deleting all isolated vertices.
Once this is isolated, the two original Erd\H{o}s--Faudree problems are easy in their literal
form, and one can then ask what remains after the padding phenomenon has been removed.

For Problem~\ref{prob:EF1}, the natural successor is the connected theory. Since
$\sri(G)$ is itself a limiting parameter, the right connected question is not whether one can
force the exact identity $\sri(G_N)=1$ at every order, but whether one can keep
$\sri(G_N)$ asymptotically as large as possible, and more generally which limits can occur
along connected $N$-vertex families. We prove a complete answer: every value in $[0,1]$
arises, even inside the class of connected bipartite graphs.

For Problem~\ref{prob:EF2}, connectivity is not the only meaningful restriction. A more
precise formulation asks what happens under a fixed maximum-degree bound when the
isolate-free core itself is forced to grow. We prove that in this regime $\sri(G_N)$ always
tends to $0$; connected bounded-degree families appear as a special case because a
connected nonempty graph has no isolated vertices.

The three principal results may be summarized as follows.

\begin{theorem}[Summary theorem]\label{thm:summary}
The following statements hold.
\begin{enumerate}[label={\rm (\alph*)}]
\item For every nonempty graph $G$ and every integer $t\ge 1$,
$\sr(tK_2,G)=\sr(tK_2,\core(G))$.
Consequently, Problem~\ref{prob:EF1} has a positive answer and
Problem~\ref{prob:EF2} has a negative answer in their original form.

\item For every $\alpha\in[0,1]$ there exists $N_0$ and a family of connected bipartite
graphs $(G_N)_{N\ge N_0}$ such that $|V(G_N)|=N$ and
$\sri(G_N)\to\alpha$.
In particular, there exists a connected family with
\[
|V(G_N)|=N,\qquad \Delta(G_N)=N-2<N-1,\qquad \sri(G_N)\to 1.
\]

\item Let $c>1$ be fixed, and let $(G_N)$ be a family of nonempty graphs with
$|V(G_N)|=N$ and $\Delta(G_N)<c$. If
$|V(\core(G_N))|\to\infty,$
then
$\sri(G_N)\to 0$.
More quantitatively, then
\[
\sri(G_N)=O_c\!\left(\left(\frac{\log |V(\core(G_N))|}{|V(\core(G_N))|}\right)^{1/(c-1)}\right).
\]
In particular, every connected bounded-degree family satisfies $\sri(G_N)\to 0$.
\end{enumerate}
\end{theorem}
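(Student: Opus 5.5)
We treat the three parts separately. Part (a) is a direct argument. Part (c) uses a block construction that rests on a cited universality theorem for random graphs. Part (b) reduces to the star case, and its upper bound is the step I expect to be hardest.

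\textbf{Part (a).} Write $G=\core(G)\cup sK_1$. Suppose $F\to(tK_2,G)$. Every blue copy of $G$ contains a blue copy of $\core(G)$, so $F\to(tK_2,\core(G))$ as well, and hence $\sr(tK_2,\core(G))\le\sr(tK_2,G)$. For the converse, let $F\to(tK_2,\core(G))$ be optimal. Since size Ramsey numbers count edges only, $F'=F\cup sK_1$ has the same number of edges. A blue copy of $\core(G)$ in $F'$ extends to a blue copy of $G$: the $s$ added isolated vertices are not used by $\core(G)$ and serve as the isolated vertices of $G$. This gives equality for every $t$.

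For the problems, take $G_N=K_2\cup(N-2)K_1$. Then $\sr(tK_2,G_N)=\sr(tK_2,K_2)=t$: the graph $tK_2$ works, and any host must contain $tK_2$ because of the all-red coloring. Hence $\sri(G_N)=1$ for every $N$, while $\Delta(G_N)=1$. This family answers Problem~\ref{prob:EF1} positively and refutes Problem~\ref{prob:EF2} for every $c>1$.

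Two general facts will be used below.
\begin{itemize}[label={}]
\item \emph{Upper bound.} We have $tG\to(tK_2,G)$: a coloring of $tG$ with no blue $G$ has a red edge in each copy, which gives a red $tK_2$. Hence $\sr(tK_2,G)\le t|E(G)|$, and $\sri\le 1$.
\item \emph{Lower-bound criterion.} If the red graph has matching number less than $t$, it has a vertex cover of size at most $2t-2$. Consequently, $F\to(tK_2,G)$ holds as soon as $F-S\supseteq G$ for every $S\subseteq V(F)$ with $|S|\le 2t-2$. Conversely, if $F\to(tK_2,G)$, then $F-S\supseteq G$ for every $S$ with $|S|\le t-1$, since coloring red all edges meeting such an $S$ produces no red $tK_2$.
\end{itemize}

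\textbf{Part (c).} We may assume $c$ is an integer. Write $n=|V(\core(G_N))|$ and $H=\core(G_N)$. Then $\Delta(H)\le c-1$ and $|E(H)|\ge n/2$.

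We will use the known universality theorem for random graphs: for $p=C(\log n/n)^{1/(c-1)}$, with positive probability a graph $R$ on $Kn$ vertices with $O(pn^2)$ edges has the following property. For every $S$ with $|S|\le \varepsilon n$, the graph $R-S$ contains every $n$-vertex graph of maximum degree at most $c-1$.

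Take $F$ to be the disjoint union of $m=\lceil 2t/(\varepsilon n)\rceil+1$ copies of $R$. Any set $S$ of at most $2t-2$ vertices meets some copy in fewer than $\varepsilon n$ vertices. By the universality property, that copy minus $S$ contains $H$, so the lower-bound criterion gives $F\to(tK_2,H)$. Therefore
\[
\frac{\sr(tK_2,H)}{t|E(H)|}\le\frac{m\cdot O(pn^2)}{t\,n/2}=O_c(p)+o_t(1).
\]
Letting $t\to\infty$ and then applying (a) gives the stated bound. Connected families are the special case $\core(G_N)=G_N$.

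\textbf{Part (b).} For $\alpha=1$, let $G_N$ be the broom: the star $K_{1,N-2}$ with one leaf extended by an edge. It is a connected tree with $\Delta=N-2$ and $N-1$ edges. Monotonicity and the Erd\H{o}s--Faudree star result give
\[
\sr(tK_2,G_N)\ge\sr(tK_2,K_{1,N-2})=(1+o(1))\,t(N-2).
\]
Hence $\sri(G_N)\ge (N-2)/(N-1)\to 1$.

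For general $\alpha\in(0,1)$, take $G_N$ to be a star $K_{1,a}$ whose center is joined to a path on $b$ vertices, with $a+b+1=N$ and $a/(a+b)\to\alpha$. This graph is a tree, hence connected and bipartite. The star part again gives $\sri(G_N)\ge (1-o(1))\,a/(N-1)$. For $\alpha=0$, paths suffice by (c).

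The main obstacle is the matching upper bound for general $\alpha$: a single host must force the star and the path simultaneously and in a connected configuration. I would try the host consisting of $t$ stars $K_{1,a+t}$ whose centers are all attached to one robust sparse graph $R$ as in (c), built to contain paths. In this host the stars contribute about $ta$ edges and $R$ contributes $o(tN)$ edges. A coloring without red $tK_2$ leaves some star center uncovered by the red vertex cover of size at most $2t-2$; that center retains at least $a$ blue leaves, and $R$ minus the cover still supplies a blue path through it. This would give $\sri(G_N)\le(1+o(1))\,a/(N-1)$, matching the lower bound.
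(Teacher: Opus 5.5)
Parts (a), (c), and the cases $\alpha\in\{0,1\}$ of (b) are fine. The genuine gap is the upper bound $\sri(G_N)\le(1+o(1))\,a/(N-1)$ for $\alpha\in(0,1)$. You only sketch it, and the sketch fails in two places.

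First, with only $t$ star centres, a red vertex cover of size up to $2t-2$ can contain every centre once $t\ge 2$. So ``some star centre is uncovered'' is false. Using $2t-1$ stars repairs this, but it doubles the star cost and only gives about $2\alpha$. Getting a cover of size $t-1$ needs K\H{o}nig's theorem, hence a bipartite host, and your host built from $R$ as in (c) is not bipartite. The private-leaf argument does show that some centre keeps all its private edges blue, but then the adversary chooses that centre, and the cover gives you no control over it.

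Second, even for a surviving centre $v$, universality of $R-S$ only yields a blue path somewhere in $R-S$, not one starting at a blue neighbour of $v$. Nothing in your argument prevents the cover from containing all of $v$'s attachment vertices, or $v$'s edges into $R$ from being red. You would need a rooted, vertex-robust path-embedding property with $o(N)$ attachment edges per centre, and none is supplied.

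A smaller point: stars $K_{1,a+t}$ cost $t(a+t)$ edges. So you must fix $t=t(N)=o(N)$ and use the infimum form of $\sri$, rather than let $t\to\infty$.

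The paper avoids all of this by choosing a family with no long sparse part that has to be robustly attached to the big star. It takes $B(\ell_1,\dots,\ell_k)$, a $K_{k,k}$ with $\ell_i$ pendant edges at the $i$-th vertex of one side. Here $k\alpha\ge 1$ is fixed, $\ell_1=\lfloor\alpha(N-2k)\rfloor$, and the remaining leaves are spread so that $\ell_i\le\ell_1$.

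The host $U_t$ is $K_{k+t-1,k+t-1}$ carrying these stars plus $t-1$ spare copies of the largest star. Because it is bipartite, K\H{o}nig gives a red cover of at most $t-1$ vertices. This damages at most $t-1$ stars or right-side vertices in total. A multiset domination argument then leaves $k$ undamaged stars dominating $(\ell_1,\dots,\ell_k)$ termwise.

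This gives $\sr(tK_2,B)\le n+(t-1)\ell_1+(k+t-1)^2-2k$. Taking the infimum over $t$ (around $\sqrt N$) yields $\sri\le\alpha+O_\alpha(N^{-1/2})$, which matches the lower bound $\Delta/|E|$.

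For (c), your disjoint-union route differs from the paper's. The paper uses $\sr(\nu(H)K_2,H)\le\sr(H,H)$, Vizing's bound $\nu(H)\ge|E(H)|/c$, and the KRSS bound on $\sr(H,H)$, evaluating the Fekete infimum only at $t=\nu(H)$. The vertex-robust universality you call ``known'' is not literally the KRSS theorem, and you should justify it. It does follow from KRSS two-colour partition universality: colour red every edge meeting $S$. That class has matching number at most $|S|<\lfloor n/2\rfloor$, so it cannot be universal, and therefore the blue class $R-S$ is.
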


The rest of the paper is organized accordingly. Section~2 proves the padding principle and
settles the two original problems exactly as stated. Section~3 treats the connected theory
associated with Problem~\ref{prob:EF1}. Section~4 gives the strengthened positive form of
Problem~\ref{prob:EF2} in terms of growth of the isolate-free core.

\section{The Erd\H{o}s--Faudree problems and the isolate-free core}

We begin with a standard observation.

\begin{proposition}\label{prop:baseK2}
For every nonempty graph $G$,
\[
\sr(K_2,G)=|E(G)|.
\]
\end{proposition}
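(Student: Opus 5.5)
The identity $\sr(K_2,G)=|E(G)|$ will follow from matching upper and lower bounds. The key point is that $F\to(K_2,G)$ says exactly that every red--blue coloring of $E(F)$ either uses red somewhere or contains a blue copy of $G$. Coloring everything blue is one such coloring, so $F\to(K_2,G)$ holds if and only if $F$ contains a copy of $G$ as a subgraph. Any coloring with at least one red edge already gives a red $K_2$, so only the all-blue coloring needs checking.

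\emph{Upper bound.} Take $F=G$ itself, or, if one prefers to avoid isolated vertices, $F=\core(G)$ padded with enough isolated vertices; in any case, isolated vertices contribute no edges. Consider any coloring of $E(G)$. If some edge is red, it is a red $K_2$. Otherwise every edge is blue, and $G$ is a blue copy of $G$. Hence $G\to(K_2,G)$, and so $\sr(K_2,G)\le |E(G)|$.

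\emph{Lower bound.} Suppose $F\to(K_2,G)$ and color every edge of $F$ blue. There is no red $K_2$, so $F$ must contain a blue copy of $G$, that is, a subgraph isomorphic to $G$. In particular $|E(F)|\ge |E(G)|$, which gives $\sr(K_2,G)\ge |E(G)|$.

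There is no real obstacle here. The only point needing care is the convention for copies of $G$ when $G$ has isolated vertices. Under the standard convention, a copy of $G$ in $F$ is an injective image of $V(G)$ with its edges, so $F$ must have at least $|V(G)|$ vertices. This costs no edges, since isolated vertices may be added to $F$ freely. The hypothesis that $G$ is nonempty ensures that $|E(G)|\ge 1$, so the minimization is over graphs with at least one edge; for empty $G$ the all-blue argument would degenerate.
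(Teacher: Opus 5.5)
Your proof is correct and follows the same approach as the paper: the upper bound comes from $G\to(K_2,G)$, and the lower bound comes from coloring every edge of a host graph blue. Your lower-bound phrasing is, if anything, slightly cleaner than the paper's: you show that every $F$ with $F\to(K_2,G)$ contains a copy of $G$ and so has at least $|E(G)|$ edges, whereas the paper explicitly treats only hosts with exactly $|E(G)|-1$ edges.
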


\begin{proof}
For any graph $F$ with $|E(G)|-1$ edges, we color all edges of $F$ blue, then there is neither a red copy
of $K_2$ nor a blue copy of $G$. Hence $F\not\to(K_2,G)$, so $\sr(K_2,G)\ge |E(G)|$.

Conversely, every red--blue coloring of $E(G)$ either contains a red edge,
and therefore a red copy of $K_2$, or else all edges are blue, and therefore form a blue
copy of $G$. Thus $G\to(K_2,G)$ and $\sr(K_2,G)\le |E(G)|$.
\end{proof}

\begin{proposition}\label{prop:subadd}
For every nonempty graph $G$ and all integers $s,t\ge 1$,
\[
\sr((s+t)K_2,G)\le \sr(sK_2,G)+\sr(tK_2,G).
\]
\end{proposition}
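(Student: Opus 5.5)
The natural approach is to take the disjoint union of two optimal host graphs. Let $F_1$ be a graph with $|E(F_1)|=\sr(sK_2,G)$ and $F_1\to(sK_2,G)$, and let $F_2$ be a graph with $|E(F_2)|=\sr(tK_2,G)$ and $F_2\to(tK_2,G)$. Both exist, since these numbers are finite: for instance a complete graph on enough vertices arrows $(sK_2,G)$, by the classical finiteness of Ramsey numbers. Set $F=F_1\cup F_2$. Then $|E(F)|=\sr(sK_2,G)+\sr(tK_2,G)$, and it suffices to show $F\to((s+t)K_2,G)$.

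Fix any red--blue coloring of $E(F)$ and restrict it to $E(F_1)$ and $E(F_2)$. If either restriction contains a blue copy of $G$, that copy lies inside $F$, and we are done. Otherwise, by the choice of $F_1$ there is a red copy of $sK_2$ in $F_1$, and by the choice of $F_2$ there is a red copy of $tK_2$ in $F_2$. Since $F_1$ and $F_2$ are vertex-disjoint, the union of these two matchings is a red matching with $s+t$ edges, that is, a red copy of $(s+t)K_2$. Hence $F\to((s+t)K_2,G)$, and the inequality follows by minimality.

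There is essentially no obstacle. The only points to check are that the minimizing graphs exist, which follows from finiteness, and that the matchings in $F_1$ and $F_2$ are disjoint, which holds because the union is a vertex-disjoint union. Notably, the argument never needs a blue copy of $G$ to span both components.
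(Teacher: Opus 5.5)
Your proof is correct and follows the paper's argument: take the disjoint union of optimal hosts for $(sK_2,G)$ and $(tK_2,G)$, so either one component yields a blue copy of $G$ or the two vertex-disjoint red matchings combine into a red $(s+t)K_2$. Your justification that the minimizing hosts exist (finiteness of $\sr(sK_2,G)$) is a small point the paper leaves implicit.
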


\begin{proof}
Choose graphs $F_s\to(sK_2,G)$ and $F_t\to(tK_2,G)$ with
$|E(F_s)|=\sr(sK_2,G)$ and $|E(F_t)|=\sr(tK_2,G)$. Let
$F=F_s\cup F_t$
be the disjoint union. Consider any red--blue coloring of $F$. If one of the two components
already contains a blue copy of $G$, and we are done. Otherwise $F_s$ contains a red copy
of $sK_2$ and $F_t$ contains a red copy of $tK_2$; since the components are disjoint, these
together form a red copy of $(s+t)K_2$. Thus $F\to((s+t)K_2,G)$, which proves the inequality.
\end{proof}

\begin{lemma}[Fekete's lemma \cite{Fekete23}]\label{lem:fekete}
Let $(a_t)_{t\ge 1}$ be a subadditive sequence of real numbers, that is,
$a_{s+t}\le a_s+a_t
~(s,t\ge 1)$.
Then
\[
\lim_{t\to\infty}\frac{a_t}{t}
=
\inf_{t\ge 1}\frac{a_t}{t}.
\]
\end{lemma}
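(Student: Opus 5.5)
The statement is Fekete's lemma for a subadditive real sequence $(a_t)_{t\ge1}$. Set $L=\inf_{t\ge 1} a_t/t\in[-\infty,\infty)$. We will prove that $\limsup_{t\to\infty} a_t/t\le L$. Since trivially $a_t/t\ge L$ for every $t$, we also have $\liminf_{t\to\infty} a_t/t\ge L$, and the two bounds together give existence of the limit and its equality with the infimum.

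\emph{Iterating subadditivity.} Fix an integer $m\ge 1$. Induction on $q$ gives $a_{qm+r}\le q\,a_m+a_r$ for all $q\ge 0$ and $1\le r\le m$. The base case $q=0$ is trivial, and the inductive step is $a_{(q+1)m+r}\le a_m+a_{qm+r}$. Every $t>m$ can be written as $t=qm+r$ with $q\ge 1$ and $r\in\{1,\dots,m\}$. Put $C_m=\max_{1\le r\le m} a_r$, a finite number. Then
\[
\frac{a_t}{t}\le \frac{q\,a_m}{t}+\frac{C_m}{t}.
\]

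\emph{Passing to the limit.} As $t\to\infty$ with $m$ fixed, we have $q/t\to 1/m$ and $C_m/t\to 0$; only the sign of $C_m$ matters, and the bound holds for either sign. Hence $\limsup_{t\to\infty} a_t/t\le a_m/m$ for every $m$. Taking the infimum over $m$ gives $\limsup_{t\to\infty} a_t/t\le L$, which completes the argument. If $L=-\infty$, the same inequality shows that $a_t/t\to-\infty$, so the limit equals the infimum in the extended sense. In the paper's application the sequence $a_t=\sr(tK_2,G)$ is nonnegative, so $L$ is finite.

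\emph{Main obstacle.} The only delicate step is the limit passage. We need $q\,a_m/t\to a_m/m$, which holds because $q=\lfloor (t-1)/m\rfloor$ satisfies $q/t\to 1/m$. The residual term must be bounded uniformly in $t$, which is why we take the maximum over the finitely many residues $r\le m$ rather than tracking $a_r$ individually.
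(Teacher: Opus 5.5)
Your proof is correct: iterating subadditivity to get $a_{qm+r}\le q\,a_m+a_r$, bounding the finitely many remainder terms by a single constant, and letting $t\to\infty$ is the standard argument, and you handle the case $\inf_t a_t/t=-\infty$ properly. The paper does not prove this lemma; it cites Fekete's 1923 paper, so your argument is the classical proof behind that citation (and in the paper's application $a_t=\sr(tK_2,G)\ge t$, so the infimum is finite anyway).
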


\begin{corollary}[Fekete reduction]\label{cor:fekete-reduction}
For every nonempty graph $G$,
\[
\sri(G)=\inf_{t\ge 1}\frac{\sr(tK_2,G)}{t|E(G)|}.
\]
Moreover,
$0 \le \sri(G)\le 1$,
and
$\sri(G)=1$
if and only if 
$\sr(tK_2,G)=t|E(G)|$
for every $t\ge 1$.
In particular, for every integer $t\ge 1$,
\[
\sri(G)\le \frac{\sr(tK_2,G)}{t|E(G)|}.
\]
\end{corollary}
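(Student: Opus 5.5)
The plan is to apply Lemma~\ref{lem:fekete} directly to the sequence $a_t:=\sr(tK_2,G)$. Proposition~\ref{prop:subadd} says this sequence is subadditive. Its terms are finite nonnegative reals: for instance $F=tG$ (or $K_n$ with $n$ large) arrows $(tK_2,G)$, and $a_t\ge 0$ trivially. Fekete's lemma then gives that $\lim_{t\to\infty} a_t/t$ exists and equals $\inf_{t\ge1}a_t/t$. Dividing by the positive constant $|E(G)|$, which is positive because $G$ is nonempty, yields the displayed infimum formula for $\sri(G)$. The final inequality $\sri(G)\le \sr(tK_2,G)/(t|E(G)|)$ is immediate from the infimum.

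For the bounds $0\le\sri(G)\le1$, nonnegativity is clear since each ratio is nonnegative. For the upper bound, take $t=1$ in the infimum and use Proposition~\ref{prop:baseK2}, which gives $\sr(K_2,G)/|E(G)|=1$. Alternatively, iterate Proposition~\ref{prop:subadd} to get $a_t\le t\,a_1=t|E(G)|$ for every $t$.

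For the equivalence, suppose first that $\sr(tK_2,G)=t|E(G)|$ for all $t$. Then every ratio in the infimum equals $1$, so $\sri(G)=1$. Conversely, suppose $\sri(G)=1$. The infimum equal to $1$ gives $a_t\ge t|E(G)|$ for every $t$, and the bound $a_t\le t|E(G)|$ from the previous paragraph gives the reverse inequality, so equality holds. The only point needing care is confirming that the $a_t$ are finite, so that Fekete's lemma applies. This is handled by the explicit witness $tG$ mentioned above, or simply by the bound $a_t\le t\,a_1$, so I expect no real obstacle.
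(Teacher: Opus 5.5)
Your proposal is correct and follows essentially the same route as the paper: subadditivity plus Fekete's lemma gives the infimum formula, and the bound $\sr(tK_2,G)\le t|E(G)|$ makes every ratio at most $1$, so the infimum equals $1$ exactly when every ratio does (the paper gets this bound from the witness $tG$, which is equivalent to your iterated subadditivity). The paper also notes $\sr(tK_2,G)\ge t$, which places each ratio in $[1/|E(G)|,1]$, but this is not needed for the stated bound $0\le\sri(G)$.
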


\begin{proof}
By Proposition~\ref{prop:subadd}, the sequence $a_t(G)=\sr(tK_2,G)$ is subadditive.
Hence Lemma~\ref{lem:fekete} yields
\[
\lim_{t\to\infty}\frac{a_t(G)}{t}=
\inf_{t\ge 1}\frac{a_t(G)}{t}.
\]
Using Proposition~\ref{prop:baseK2}, namely $\sr(K_2,G)=|E(G)|$, gives the displayed
formula for $\sri(G)$.

Finally, a graph with fewer than $t$ edges cannot contain a matching of size $t$.
Hence, if $|E(F)|<t$ and all edges of $F$ are colored red, then there is neither a red copy
of $tK_2$ nor a blue copy of $G$. Therefore, 
$\sr(tK_2,G)\ge t$.
On the other hand, the disjoint union of $t$ copies of $G$ shows that
$\sr(tK_2,G)\le t|E(G)|$.
Therefore every ratio $\sr(tK_2,G)/(t|E(G)|)$ lies in $[1/|E(G)|,1]$, and so does its
infimum. The infimum equals $1$ if and only if every term equals $1$.
\end{proof}

\begin{lemma}[Padding invariance]\label{lem:padding}
Let $H$ be a nonempty graph, let $s\ge 0$, and let $t\ge 1$. Then
\[
\sr(tK_2,H\cup sK_1)=\sr(tK_2,H).
\]
In particular,
$\sri(H\cup sK_1)=\sri(H)$.
\end{lemma}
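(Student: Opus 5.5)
We prove the two inequalities separately, working directly from the definition of $\sr$ as a minimum over host graphs $F$. The one subtlety is that $|E(F)|$ does not see isolated vertices of $F$. So we may add isolated vertices to a host graph without changing its size.

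For $\sr(tK_2,H\cup sK_1)\le \sr(tK_2,H)$, choose $F$ with $F\to(tK_2,H)$ and $|E(F)|=\sr(tK_2,H)$. Let $F'=F\cup sK_1$, so that $|E(F')|=|E(F)|$. Take any red--blue coloring of $E(F')$; it is just a coloring of $E(F)$. If it gives a red $tK_2$, we are done. Otherwise there is a blue copy of $H$ inside $F$. This copy uses at most $|V(F)|$ vertices, so at least $s$ vertices of $F'$ lie outside it, namely the $s$ added isolated vertices. Placing the $s$ isolated vertices of $H\cup sK_1$ on these vertices gives a blue copy of $H\cup sK_1$. A copy need not be induced and the $sK_1$ part carries no edges, so the vertices used only have to be distinct. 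Hence $F'\to(tK_2,H\cup sK_1)$.

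For $\sr(tK_2,H)\le \sr(tK_2,H\cup sK_1)$, take $F\to(tK_2,H\cup sK_1)$ of minimum size. Any coloring of $F$ contains either a red $tK_2$ or a blue $H\cup sK_1$, and the latter contains a blue $H$ as a subgraph. So $F\to(tK_2,H)$, and the inequality follows.

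The statement $\sri(H\cup sK_1)=\sri(H)$ then follows from the definition of $\sri$ via the limit, or the infimum in Corollary~\ref{cor:fekete-reduction}. The reason is that $|E(H\cup sK_1)|=|E(H)|$ and, by the above, the numerators agree for every $t$. The only point requiring care is the vertex-count issue in the first inequality, which is handled by adding isolated vertices to the host graph.
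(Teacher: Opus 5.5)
Your proof is correct and follows essentially the same route as the paper: one direction because a blue $H\cup sK_1$ contains a blue $H$, the other by padding an optimal host $F$ with $s$ isolated vertices, which leaves $|E(F)|$ unchanged. The statement for $\sri$ then follows because $|E(H\cup sK_1)|=|E(H)|$. Your remark that the added isolated vertices are automatically disjoint from the blue copy of $H$ inside $F$ only makes explicit a point the paper leaves implicit.
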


\begin{proof}
The inequality
$\sr(tK_2,H)\le \sr(tK_2,H\cup sK_1)$
is immediate, since every blue copy of $H\cup sK_1$ contains a blue copy of $H$.

For the reverse inequality, choose a graph $F\to(tK_2,H)$ with
$|E(F)|=\sr(tK_2,H)$, and define
$F':=F\cup sK_1$.
Then $|E(F')|=|E(F)|$. In any red--blue coloring of $F'$, either the restriction to $F$
contains a red copy of $tK_2$, or else it contains a blue copy of $H$; in the latter case, the
$s$ isolated vertices of $F'$ enlarge this blue copy to a blue copy of $H\cup sK_1$. Thus
$F'\to(tK_2,H\cup sK_1)$, so
$\sr(tK_2,H\cup sK_1)\le |E(F')|=\sr(tK_2,H)$.
The statement for $\sri (H\cup sK_1)$ follows from Proposition~\ref{prop:baseK2}.
\end{proof}

For a nonempty graph $G$, let $\core(G)$ denote the graph obtained from $G$ by deleting
all isolated vertices.

\begin{corollary}[Core reduction]\label{cor:core}
For every nonempty graph $G$ and every integer $t\ge 1$,
\[
\sr(tK_2,G)=\sr(tK_2,\core(G))
\qquad\text{and}\qquad
\sri(G)=\sri(\core(G)).
\]
\end{corollary}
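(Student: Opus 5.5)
The plan is to obtain Corollary~\ref{cor:core} directly from Lemma~\ref{lem:padding}, applied with $H=\core(G)$ and with $s$ the number of isolated vertices of $G$. The first step is to confirm that this choice of $H$ is admissible. Since $G$ is nonempty, it has at least one edge. Deleting isolated vertices removes no edges, so $\core(G)$ keeps every edge of $G$ and is itself nonempty. In particular $|E(\core(G))|=|E(G)|\ge 1$.

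The second step is the decomposition $G=\core(G)\cup sK_1$, where $s\ge 0$ is the number of isolated vertices of $G$. This holds because a vertex of $G$ is either isolated or incident to an edge. All edges of $G$ lie inside $\core(G)$, and the isolated vertices form an edgeless graph on $s$ vertices that is vertex-disjoint from $\core(G)$. Hence $G$ is isomorphic to the disjoint union $\core(G)\cup sK_1$. Since size Ramsey numbers are invariant under isomorphism of the target graph, Lemma~\ref{lem:padding} gives $\sr(tK_2,G)=\sr(tK_2,\core(G))$ for every $t\ge 1$. The case $s=0$ is trivial and is also covered by the lemma.

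For the limiting statement, divide the equality by $t|E(G)|=t|E(\core(G))|$, which is legitimate because the two graphs have the same number of edges. Then let $t\to\infty$; the limit exists by Corollary~\ref{cor:fekete-reduction}. Equivalently, one can simply quote the final clause of Lemma~\ref{lem:padding}.

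There is no substantive obstacle here. The only points needing care are the nonemptiness of $\core(G)$, which is required by the hypothesis of Lemma~\ref{lem:padding}, and the equality of edge counts, which ensures the normalisations in $\sri$ agree.
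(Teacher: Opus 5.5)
Your proposal is correct and is exactly the paper's argument: write $G=\core(G)\cup sK_1$ with $s$ the number of isolated vertices and apply Lemma~\ref{lem:padding}. Your extra checks, namely nonemptiness of $\core(G)$ and $|E(\core(G))|=|E(G)|$, are precisely the details left implicit in the paper's one-line proof.
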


\begin{proof}
Write $G=\core(G)\cup sK_1$ for some $s\ge 0$ and apply
Lemma~\ref{lem:padding}.
\end{proof}

\begin{corollary}[Literal answers to Problems~\ref{prob:EF1} and \ref{prob:EF2}]
\label{cor:literal}
The family
\[
G_N=K_2\cup (N-2)K_1
\qquad (N\ge 3)
\]
satisfies
$|V(G_N)|=N$, $\Delta(G_N)=1<N-1$, and $\sri(G_N)=1$.
Consequently, Problem~\ref{prob:EF1} has a positive answer and
Problem~\ref{prob:EF2} has a negative answer for every fixed $c>1$.
\end{corollary}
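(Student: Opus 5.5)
The plan is to verify the three listed properties of $G_N=K_2\cup (N-2)K_1$ directly and then compute $\sri(G_N)$ through the core reduction. The order and degree conditions are immediate. $G_N$ has $2+(N-2)=N$ vertices. Its only edge gives the two endpoints degree $1$, and every other vertex has degree $0$, so $\Delta(G_N)=1$. For $N\ge 3$ we have $N-1\ge 2>1$, so $\Delta(G_N)<N-1$ holds.

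Next we compute $\sri(G_N)$. Since $G_N$ is nonempty and $\core(G_N)=K_2$, Corollary~\ref{cor:core} (equivalently Lemma~\ref{lem:padding} with $H=K_2$ and $s=N-2$) gives $\sri(G_N)=\sri(K_2)$. By Corollary~\ref{cor:fekete-reduction}, it suffices to show $\sr(tK_2,K_2)=t$ for every $t\ge 1$, since $|E(K_2)|=1$.
\begin{itemize}
\item \emph{Upper bound.} Take $F=tK_2$. If some edge of $F$ is blue, that edge is a blue $K_2$. Otherwise all $t$ edges are red and form a red $tK_2$.
\item \emph{Lower bound.} This was already shown in Corollary~\ref{cor:fekete-reduction}: any $F$ with fewer than $t$ edges, colored entirely red, contains neither a red $tK_2$ nor a blue $K_2$. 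Hence $\sr(tK_2,K_2)\ge t$.
\end{itemize}
Therefore every ratio $\sr(tK_2,K_2)/t$ equals $1$, and $\sri(K_2)=1$. It follows that $\sri(G_N)=1$ for all $N\ge 3$.

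Finally we draw the consequences for the two problems. For Problem~\ref{prob:EF1}, the family $\{G_N\}_{N\ge 3}$ has $|V(G_N)|=N$, $\Delta(G_N)<N-1$ and $\sri(G_N)=1$ for every $N$. (If the indexing $N>1$ must be matched literally, one can prepend any single graph for $N=2$, or simply note that the problem asks for an infinite family.) For Problem~\ref{prob:EF2}, fix any $c>1$. Then $\Delta(G_N)=1<c$ for all $N$, yet $\sri(G_N)=1$ does not tend to $0$, so the answer is negative.

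There is no real obstacle here. The only point needing care is the lower bound $\sr(tK_2,K_2)\ge t$, and Corollary~\ref{cor:fekete-reduction} already supplies it.
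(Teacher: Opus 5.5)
Your proposal is correct and follows the paper's proof: you reduce to $\core(G_N)=K_2$ by padding invariance and use $\sr(tK_2,K_2)=t$, which you justify explicitly where the paper only asserts it. One small caveat: your parenthetical about prepending a graph for $N=2$ does not work, because $\Delta(G_2)<1$ forces $G_2=2K_1$, which is empty and has no defined $\sri$; the correct reading is your alternative, namely that an infinite family indexed by $N\ge 3$ suffices, as in the paper.
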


\begin{proof}
Since $\sr(tK_2,K_2)=t$ for every $t\ge 1$, we have $\sri(K_2)=1$.
Lemma~\ref{lem:padding} gives
$\sri\bigl(K_2\cup (N-2)K_1\bigr)=\sri(K_2)=1$.
The degree statement is immediate.
\end{proof}

The previous corollary explains why the literal forms of the two problems are easy. The
next proposition records the same phenomenon in a slightly more structural way.

\begin{proposition}\label{prop:bounded-core}
Let $(G_N)$ be a family of nonempty graphs. If
\[
\sup_N |V(\core(G_N))|<\infty,
\]
then the set
$\{\sri(G_N):N\ge 1\}$
is finite.
\end{proposition}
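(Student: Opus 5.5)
By Corollary~\ref{cor:core}, $\sri(G_N)=\sri(\core(G_N))$ for every $N$. The plan is therefore to show that the graphs $\core(G_N)$ range over only finitely many isomorphism types, and that $\sri$ is an isomorphism invariant. Together these give finiteness of the set of values.

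First, set $M:=\sup_N |V(\core(G_N))|<\infty$. Each $\core(G_N)$ is a graph on at most $M$ vertices. It is nonempty because $G_N$ is nonempty, and deleting isolated vertices removes no edges. Up to isomorphism, there are only finitely many graphs on at most $M$ vertices; a crude bound is $\sum_{m\le M}2^{\binom m2}$. Hence the family $\{\core(G_N)\}$ meets only finitely many isomorphism classes.

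Second, I would observe that $\sr(tK_2,H)$ depends only on the isomorphism class of $H$. The relation $F\to(tK_2,H)$ refers only to the existence of copies of $H$, so it is unchanged when $H$ is replaced by an isomorphic graph. The same holds for $|E(H)|$. By Corollary~\ref{cor:fekete-reduction}, $\sri(H)$ is then determined by the isomorphism class of $H$.

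Combining the two steps, the set $\{\sri(G_N)\}=\{\sri(\core(G_N))\}$ is the image of a finite set of isomorphism classes under a well-defined map, and so it is finite. I do not expect any real obstacle; the only point to state explicitly is that isomorphism invariance of $\sri$ is what allows the finite count of graphs on at most $M$ vertices to transfer to a finite count of values.
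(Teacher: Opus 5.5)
Your proof is correct and matches the paper's argument: you reduce to $\core(G_N)$ via Corollary~\ref{cor:core} and then use that there are only finitely many isomorphism types of graphs on at most $M$ vertices. The only difference is that you state explicitly that $\sri$ is an isomorphism invariant, a step the paper leaves implicit.
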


\begin{proof}
By Corollary~\ref{cor:core}, the value $\sri(G_N)$ depends only on $\core(G_N)$. If all
cores have order at most $M$, then there are only finitely many graphs up to isomorphism on
at most $M$ vertices. Hence only finitely many values of $\sri(G_N)$ can occur.
\end{proof}

\begin{remark}
Proposition~\ref{prop:bounded-core} pinpoints the mechanism behind the literal negative
answer to Problem~\ref{prob:EF2}: one may keep the core bounded and pad by isolated
vertices forever. The natural question is therefore what happens when isolated vertices are
forbidden or, more generally, when the isolate-free core is forced to grow.
\end{remark}

\section{Connected families and Problem~\ref{prob:EF1}}

We now turn to the connected theory generated by Problem~\ref{prob:EF1}. Since
$\sri(G)$ is itself a limit parameter, the correct connected question is to determine which
limits are attainable along connected $N$-vertex families.

\begin{lemma}[Vertex deletion]\label{lem:delete-one}
Let $G$ be a nonempty graph and $s\ge 1$. If
$F\to(sK_2,G)$
and any $v\in V(F)$, then
$F-v\to((s-1)K_2,G)$.
\end{lemma}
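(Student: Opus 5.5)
The plan is a direct coloring argument by contradiction. We are given $F\to(sK_2,G)$ and a vertex $v\in V(F)$, and we must show $F-v\to((s-1)K_2,G)$. If $s=1$, then $(s-1)K_2$ is the empty graph, and it is trivially present in any coloring. Under that convention the claim holds vacuously. Alternatively, the statement is only meaningful for $s\ge 2$, and I would note this convention explicitly. So assume $s\ge 2$.

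Suppose, for contradiction, that there is a red--blue coloring $\chi$ of $E(F-v)$ with no red $(s-1)K_2$ and no blue copy of $G$. Extend $\chi$ to a coloring $\chi'$ of $E(F)$ by coloring every edge incident with $v$ red.

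We first rule out a blue copy of $G$ under $\chi'$. All blue edges of $\chi'$ lie in $F-v$. Any blue copy of $G$ in $F$ uses only blue edges, so $v$ is at most an isolated vertex of that copy. Since $G$ is nonempty, the copy is not entirely isolated vertices, but $v$ might still serve as the image of an isolated vertex of $G$. This is the one subtle point. To handle it, I would first replace $G$ by $\core(G)$. Corollary~\ref{cor:core} only addresses size Ramsey numbers, not arrowing, so I would instead argue directly. If $G$ has isolated vertices and the blue copy uses $v$ as an isolated vertex, then we need another vertex of $F-v$ outside the copy to take its place. If no such vertex exists, the copy of $G$ covers all of $V(F)$, so $|V(F-v)|=|V(G)|-1$. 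Then $F-v$ cannot contain any copy of $G$, and the failure would come only from the red side.

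A cleaner route avoids this bookkeeping by using the hypothesis differently. Since $F\to(sK_2,G)$, the coloring $\chi'$ yields either a red $sK_2$ or a blue $G$.

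\emph{Red case.} A red $sK_2$ in $F$ has at most one edge incident with $v$, because a matching uses $v$ at most once. Deleting that edge, if present, leaves a red $(s-1)K_2$ inside $F-v$, contradicting the choice of $\chi$.

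\emph{Blue case.} A blue copy of $G$ avoids every edge at $v$. If it avoids $v$ entirely, it lies in $F-v$, a contradiction. Otherwise $v$ plays the role of an isolated vertex $w$ of $G$. Then I would try to reroute: pick any vertex $u$ of $F-v$ not in the copy and map $w$ to $u$ instead, which still gives a blue $G$ in $F-v$.

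The main obstacle is therefore the degenerate situation in which no such $u$ exists, that is, the copy covers all of $V(F)$. I expect to handle it by noting that the statement is intended, and used later, with $G$ isolate-free, which can be assumed by Corollary~\ref{cor:core}. I would add this as an explicit hypothesis or remark; in that setting the blue case simply cannot involve $v$. Failing that, I would modify the extension. Color the edges at $v$ red except one edge, which is colored blue if needed. This forces $v$ to be non-isolated in any blue structure, though a careful check would still be needed.
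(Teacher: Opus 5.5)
Your core argument is the paper's: extend a bad coloring of $F-v$ by making every edge at $v$ red, and note that a red matching meets $v$ at most once. But the point you flag is not a removable technicality. The lemma is \emph{false} as stated when $G$ has isolated vertices. The paper's proof breaks at ``there is also still no blue copy of $G$, because all newly colored edges are red'': red edges at $v$ only force $v$ to be isolated in a blue copy, not absent from it.

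Here is a concrete counterexample. Take $s=2$, $F=2K_2$ with edges $ab$ and $uv$, and $G=K_2\cup 2K_1$.
\begin{itemize}
\item Every coloring of $F$ either makes both edges red, giving a red $2K_2$, or has a blue edge. A blue edge together with the other two vertices is a blue copy of $G$. Hence $F\to(2K_2,G)$.
\item But $F-v=K_2\cup K_1$ has only three vertices. Coloring its edge blue gives neither a red $K_2$ nor a blue $G$.
\end{itemize}
This is exactly your degenerate case $|V(F)|=|V(G)|$. Your own remark that $F-v$ then contains no copy of $G$ already settles it: the all-blue coloring of $F-v$ shows $F-v\not\to((s-1)K_2,G)$ for every $s\ge 2$. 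So your fallback of coloring one edge at $v$ blue cannot succeed, because the conclusion itself fails. In the example, both extensions of the blue coloring of $ab$ contain a blue $G$.

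The correct resolution is the one you list first: add the hypothesis that $G$ has no isolated vertices, or more generally that $|V(F)|>|V(G)|$. Under that hypothesis your argument is complete. A blue copy of $G$ in the extended coloring either avoids $v$, which is automatic if $G$ is isolate-free. Otherwise it uses $v$ only as an isolated vertex, and when $|V(F)|>|V(G)|$ it can be rerouted from $v$ to an unused vertex of $F-v$.

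This restriction cannot be derived by applying Corollary~\ref{cor:core} to the lemma itself. That corollary concerns the numbers $\sr(tK_2,\cdot)$, not arrowing, and arrowing genuinely sees isolated vertices, as the example shows. It does suffice for the paper's use of the lemma in Proposition~\ref{prop:delta-lower}. There, $F\to(tK_2,G)$ implies $F\to(tK_2,\core(G))$. Applying the corrected lemma to $\core(G)$ gives the same bound, because $\Delta$ and $|E|$ are unchanged by deleting isolated vertices.
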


\begin{proof}
Suppose that $F-v\not\to((s-1)K_2,G)$. Then there is a red--blue coloring of $F-v$ with
no red copy of $(s-1)K_2$ and no blue copy of $G$. Extend this coloring to $F$ by coloring
every edge incident to $v$ red. Any red matching in the extended coloring uses at most one
edge incident to $v$, so there is no red copy of $sK_2$. There is also still no blue copy of
$G$, because all newly colored edges are red. This contradicts $F\to(sK_2,G)$.
\end{proof}

\begin{proposition}[A degree lower bound]\label{prop:delta-lower}
For every nonempty graph $G$ and every integer $t\ge 1$,
\[
\sr(tK_2,G)\ge (t-1)\Delta(G)+|E(G)|.
\]
Consequently,
\[
\sri(G)\ge \frac{\Delta(G)}{|E(G)|}.
\]
\end{proposition}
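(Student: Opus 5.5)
We argue by induction on $t$, using Lemma~\ref{lem:delete-one} to peel off one high-degree vertex at a time. The base case $t=1$ is Proposition~\ref{prop:baseK2}, which gives $\sr(K_2,G)=|E(G)|$ and hence the bound with equality. For the inductive step, let $t\ge 2$ and take a graph $F$ with $F\to(tK_2,G)$ and $|E(F)|=\sr(tK_2,G)$. We need $|E(F)|\ge (t-1)\Delta(G)+|E(G)|$.

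The key step is to show that $F$ has a vertex of degree at least $\Delta(G)$. Suppose instead that $\Delta(F)<\Delta(G)$. Then $F$ contains no copy of $G$ at all, so the all-blue coloring yields no blue $G$. By the arrowing property it must then contain a red $tK_2$, which is impossible since nothing is red. This contradiction shows $\Delta(F)\ge \Delta(G)$, so we may choose a vertex $v$ with $\deg_F(v)\ge \Delta(G)$.

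By Lemma~\ref{lem:delete-one} with $s=t$, we have $F-v\to((t-1)K_2,G)$. The induction hypothesis then gives
\[
|E(F)| = |E(F-v)|+\deg_F(v) \ge (t-2)\Delta(G)+|E(G)|+\Delta(G),
\]
which is the claimed inequality.

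For the consequence, divide by $t|E(G)|$ to get
\[
\frac{\sr(tK_2,G)}{t|E(G)|}\ge \frac{(t-1)\Delta(G)+|E(G)|}{t|E(G)|}.
\]
The right-hand side tends to $\Delta(G)/|E(G)|$ as $t\to\infty$. Since $\sri(G)$ is the limit of the left-hand side, which exists by Corollary~\ref{cor:fekete-reduction}, it follows that $\sri(G)\ge \Delta(G)/|E(G)|$.

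No real obstacle arises. The only point requiring care is that the all-blue coloring forces $F$ to contain a copy of $G$, and hence a vertex of degree at least $\Delta(G)$, for every $t\ge 1$.
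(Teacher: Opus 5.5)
Your proof is correct and is essentially the paper's argument: the paper deletes a maximum-degree vertex $t-1$ times via Lemma~\ref{lem:delete-one}, uses the all-blue coloring to see that each intermediate graph contains $G$, and sums the degrees, which is your induction unrolled. For the consequence the paper takes the infimum over $t$ through Corollary~\ref{cor:fekete-reduction} rather than passing to the limit, which is equivalent here.
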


\begin{proof}
Let $F\to(tK_2,G)$ and set $F_0=F$. For each $1\le i\le t-1$, choose a vertex $v_i$ of
maximum degree in $F_{i-1}$ and define
$F_i=F_{i-1}-v_i$.
By Lemma~\ref{lem:delete-one},
$F_i\to((t-i)K_2,G)
~ (0\le i\le t-1)$.
In particular, each $F_i$ contains a copy of $G$, because if all edges of $F_i$ are colored
blue then the arrowing property forces a blue copy of $G$. Therefore
$d_{F_{i-1}}(v_i)=\Delta(F_{i-1})\ge \Delta(G)
~ (1\le i\le t-1)$,
and also $|E(F_{t-1})|\ge |E(G)|$. Hence
\[
|E(F)|=
\sum_{i=1}^{t-1}d_{F_{i-1}}(v_i)+|E(F_{t-1})|
\ge (t-1)\Delta(G)+|E(G)|.
\]
Taking the minimum over all such $F$ proves the first inequality. Dividing by
$t|E(G)|$ yields
\[
\frac{\sr(tK_2,G)}{t|E(G)|}
\ge
\frac{(t-1)\Delta(G)+|E(G)|}{t|E(G)|}
\qquad (t\ge 1).
\]
Now take the infimum over $t$ and use Corollary~\ref{cor:fekete-reduction}.
\end{proof}

For integers $k\ge 2$ and
$\ell_1\ge \ell_2\ge \cdots\ge \ell_k\ge 0$,
let $B(\ell_1,\dots,\ell_k)$ be the graph obtained from $K_{k,k}$ by attaching $\ell_i$
pendant edges to the $i$-th vertex in one part of the bipartition. This graph is connected and
bipartite. If
\[
n=2k+\sum_{i=1}^k\ell_i,
\]
then
\begin{equation}\label{eq:basic-params}
|V(B)|=n,
\qquad
|E(B)|=k^2+\sum_{i=1}^k\ell_i=n+k(k-2),
\qquad
\Delta(B)=k+\ell_1.
\end{equation}

\begin{proposition}[A two-sided estimate for the bundled bipartite graphs]
\label{prop:bundle-bounds}
Let $B=B(\ell_1,\dots,\ell_k)$ with parameters as above. Then for every integer $t\ge 1$,
\[
(t-1)(k+\ell_1)+n+k(k-2)
\le
\sr(tK_2,B)
\le
n+(t-1)\ell_1+(k+t-1)^2-2k.
\]
\end{proposition}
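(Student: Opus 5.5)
The lower bound is immediate from Proposition~\ref{prop:delta-lower}. Substituting $\Delta(B)=k+\ell_1$ and $|E(B)|=n+k(k-2)$ from \eqref{eq:basic-params} gives exactly the left-hand side. So the real work is the upper bound, which needs an explicit host graph $F$ with
\[
|E(F)|=n+(t-1)\ell_1+(k+t-1)^2-2k
\]
and $F\to(tK_2,B)$.

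For the construction, we enlarge the complete bipartite part and give every left vertex enough pendant edges. Let $F$ be obtained from $K_{k+t-1,k+t-1}$, with parts $X$ and $Y$, by choosing $k$ vertices $x_1,\dots,x_k\in X$ and $t-1$ further vertices $x_{k+1},\dots,x_{k+t-1}\in X$. We attach $\ell_i$ pendant edges to $x_i$ for $i\le k$, and $\ell_1$ pendant edges to each of the extra $t-1$ vertices. The edge count is $(k+t-1)^2+\sum_i\ell_i+(t-1)\ell_1$. Since $n=2k+\sum\ell_i$, this equals $n-2k+(t-1)\ell_1+(k+t-1)^2$, as required.

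Next we verify $F\to(tK_2,B)$. Take a coloring with no red $tK_2$. By K\H{o}nig's theorem, the red edges can be covered by a vertex set $C$ with $|C|\le t-1$. Deleting $C$ leaves a graph all of whose edges are blue, so it suffices to show that $F-C$ contains $B$ for every $C$ with $|C|\le t-1$. Suppose $C$ contains $a$ vertices of $X$ and $b$ vertices of $Y$ (pendant leaves in $C$ can only help us, so we may treat them as costing a vertex without damaging the core).
\begin{itemize}
\item In $Y$, at least $k+t-1-b\ge k$ vertices survive.
\item In $X$, at least $k+t-1-a\ge k$ vertices survive. Every surviving $x_j$ is adjacent to all surviving $Y$ vertices.
\end{itemize}
Sort the surviving $X$-vertices by their number of surviving pendant leaves. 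We then need the $i$-th best surviving vertex to retain at least $\ell_i$ leaves.

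The main obstacle is this last matching step, because removed pendant leaves reduce leaf counts. Deleting a leaf costs one vertex of $C$ but destroys only one pendant edge. The idea is to reassign the roles of $x_1,\dots,x_k$ to surviving vertices among the $k+t-1$ vertices of $X$ whose leaf counts are large enough. The multiset of leaf counts is $\ell_1,\dots,\ell_k$ together with $t-1$ extra copies of $\ell_1$. I would try a Hall-type or greedy argument, sorted by requirement $\ell_1\ge\cdots\ge\ell_k$. With budget $t-1$, the adversary can disable at most $t-1$ of the vertices with large leaf counts, whether by deleting them or by deleting their leaves. The $t-1$ surplus copies of $\ell_1$ then guarantee that for each $i$, at least $i$ surviving vertices have at least $\ell_i$ leaves.

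Some care is needed when a leaf is only partially depleted. In that case the vertex is still usable for a smaller requirement $\ell_i$, so I would assign greedily from the largest requirement downward. A simpler alternative is to first reduce to the case where $C$ contains no leaves: replace each leaf in $C$ by its neighbor in $X$, which covers at least as many red edges. Then the adversary merely deletes whole vertices, and the count above is immediate. This leaves $t-1$ vertices of $X$ with $\ell_1$ leaves each to compensate, and $k$ surviving $Y$ vertices.
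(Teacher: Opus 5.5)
Your proposal is correct and follows essentially the same route as the paper: the same host graph $K_{k+t-1,k+t-1}$ with $t-1$ extra stars of size $\ell_1$, K\H{o}nig's theorem to get a red vertex cover of size at most $t-1$, and the count showing that for each $i$ at least $i$ undisturbed centres carry at least $\ell_i$ leaves. Your reduction of moving any leaf in the cover to its centre is an equivalent reformulation of the paper's ``damaged star'' bookkeeping, where each vertex of the cover disables at most one star.
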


\begin{proof}
The lower bound is Proposition~\ref{prop:delta-lower} together with
\eqref{eq:basic-params}.

For the upper bound, construct a bipartite graph $U_t$ as follows. Start with a complete
bipartite graph $K_{k+t-1,k+t-1}$ with left part $X$ and right part $Y$. Choose $k$ vertices
of $X$ and attach to them disjoint stars of sizes $\ell_1,\dots,\ell_k$, respectively. Attach to
each of the remaining $t-1$ vertices of $X$ a disjoint star of size $\ell_1$. Then
\[
|E(U_t)|=(k+t-1)^2+\sum_{i=1}^k\ell_i+(t-1)\ell_1
= n+(t-1)\ell_1+(k+t-1)^2-2k.
\]

We claim that for every set $S\subseteq V(U_t)$ with $|S|\le t-1$, the graph $U_t-S$
contains a copy of $B$. Call one of the $k+t-1$ attached stars \emph{damaged} if $S$
contains its center or one of its private leaves. Since each vertex of $S$ affects at most one
attached star, at most $t-1$ stars are damaged. Let $M$ be the multiset of star sizes
\[
M=\{\ell_1,\dots,\ell_k,\underbrace{\ell_1,\dots,\ell_1}_{t-1\text{ extra copies}}\}.
\]
After deleting at most $t-1$ entries from $M$, let
$\ell'_1\ge \ell'_2\ge \cdots\ge \ell'_k$
be the $k$ largest remaining entries. For each $i\in\{1,\dots,k\}$, the original multiset $M$ contains the elements $\ell_i$ for $1 \leq i \leq k$, together with $t-1$ additional copies of $\ell_1$. After removing at most $t-1$ entries, at least $k$ such entries
remain. Hence $\ell'_i\ge \ell_i$ for every $i$. Thus among the undamaged stars one can select
$k$ whose sizes, after reordering, dominate $(\ell_1,\dots,\ell_k)$ termwise. Moreover, at most
$t-1$ vertices of $Y$ are deleted, so at least $k$ vertices of $Y$ survive. Using those $k$
surviving vertices of $Y$ together with the chosen undamaged stars, one obtains a copy of
$B$ inside $U_t-S$. This proves the claim.

Now color $E(U_t)$ red and blue. If the red graph contains a matching of size $t$, then we
already have a red copy of $tK_2$. Otherwise, since $U_t$ is bipartite, K\H{o}nig's theorem
implies that the red graph has a vertex cover $S$ of size at most $t-1$. Then all edges of
$U_t-S$ are blue, and by the claim $U_t-S$ contains a blue copy of $B$. Therefore
$U_t\to(tK_2,B)$, which proves the upper bound.
\end{proof}

\begin{theorem}[Connected bipartite limit classification]\label{thm:connected-classification}
For every $\alpha\in[0,1]$ there exists $N_0$ and a family of connected bipartite graphs
$(G_N)_{N\ge N_0}$ such that $|V(G_N)|=N$ and
\[
\sri(G_N)\to \alpha.
\]
More precisely:
\begin{enumerate}[label={\rm (\roman*)}]
\item If $\alpha\in(0,1]$, one may choose a family with
\[
\begin{aligned}
|E(G_N)|&=N+O_{\alpha}(1),\\
\Delta(G_N)&=\alpha N+O_{\alpha}(1),\\
\sri(G_N)&=\alpha+O_{\alpha}(N^{-1/2}).
\end{aligned}
\]
\item If $\alpha=0$ and $t$ is sufficiently large, one may choose a family with
\[
\begin{aligned}
|E(G_N)|&\le 2N,\\
\Delta(G_N)&\le 2\sqrt N+2,\\
\sri(G_N)&=O(N^{-1/2}).
\end{aligned}
\]
\end{enumerate}
\end{theorem}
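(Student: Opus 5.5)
The plan is to realize every target value with the bundled bipartite graphs $B(\ell_1,\dots,\ell_k)$ and to squeeze $\sri(B)$ between the two estimates of Proposition~\ref{prop:bundle-bounds}. The lower side gives, through Proposition~\ref{prop:delta-lower}, $\sri(B)\ge \Delta(B)/|E(B)|=(k+\ell_1)/(n+k(k-2))$. The upper bound in Proposition~\ref{prop:bundle-bounds} grows quadratically in $t$, so one cannot simply let $t\to\infty$. Instead I will use Corollary~\ref{cor:fekete-reduction} in the form $\sri(B)\le \sr(tK_2,B)/(t|E(B)|)$ for a single well-chosen $t$, namely $t=\lceil\sqrt N\,\rceil$. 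With $n=N$ and $|E(B)|\ge N$, this yields
\[
\sri(B)\le \frac{N+(t-1)\ell_1+(k+t-1)^2}{tN}\le \frac{1}{t}+\frac{\ell_1}{N}+\frac{(k+t)^2}{tN}.
\]
Balancing $1/t$ against $t/N$ is what forces the error term $O(N^{-1/2})$.

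For case (i), $\alpha\in(0,1]$, I take $k$ fixed depending only on $\alpha$, say $k=\lceil 1/\alpha\rceil+1$, or $k=2$ when $\alpha=1$. I set $\ell_1=\lfloor\alpha N\rfloor-k$ and distribute the remaining $N-2k-\ell_1$ pendant edges as evenly as possible among $\ell_2,\dots,\ell_k$. Each of these is then about $(1-\alpha)N/(k-1)$. This is strictly less than $\alpha N-O(1)$ because $k-1>(1-\alpha)/\alpha$, so $\ell_1\ge\ell_2\ge\cdots\ge\ell_k\ge0$ holds for $N\ge N_0(\alpha)$. By \eqref{eq:basic-params} we get $|E|=N+k(k-2)=N+O_\alpha(1)$ and $\Delta=k+\ell_1=\alpha N+O_\alpha(1)$. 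The lower bound then gives $\sri\ge\alpha-O_\alpha(1/N)$. The upper display with $k=O_\alpha(1)$ and $t\approx\sqrt N$ gives $\sri\le \alpha+O_\alpha(N^{-1/2})$. For $\alpha=1$, the choice $k=2$, $\ell_1=N-4$, $\ell_2=0$ gives exactly $\Delta=N-2$, which is the claimed special family.

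For case (ii), $\alpha=0$, I let the core grow: $k=\lfloor\sqrt N\rfloor$ (so $k\ge2$ for large $N$), with all $\ell_i$ balanced so that $\ell_1\le \lceil (N-2k)/k\rceil\le\sqrt N+2$. Then $|E|=N+k(k-2)\le 2N$ and $\Delta=k+\ell_1\le 2\sqrt N+2$. In the upper display every term is $O(N^{-1/2})$ once $t=\lceil\sqrt N\,\rceil$: indeed $\ell_1/N=O(N^{-1/2})$ and $(k+t)^2/(tN)=O(N/(N^{3/2}))$. Hence $\sri(G_N)=O(N^{-1/2})$. Connectivity and bipartiteness are automatic from the construction, since $k\ge2$.

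The main obstacle is bookkeeping rather than ideas. I must make sure the integer rounding keeps the sequence $\ell_i$ nonincreasing and nonnegative with the exact sum $N-2k$, and I must verify that the single-$t$ upper bound really captures the limit. The latter is the conceptual point: the quadratic term $(k+t-1)^2$ in Proposition~\ref{prop:bundle-bounds} would be fatal if one took $t\to\infty$, and it is exactly the freedom to fix $t\sim\sqrt N$ in the Fekete infimum that rescues the argument.
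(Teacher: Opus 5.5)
Your proposal is correct and follows essentially the paper's proof: the same bundled graphs $B(\ell_1,\dots,\ell_k)$ (fixed $k$ for $\alpha>0$, $k=\lfloor\sqrt N\rfloor$ for $\alpha=0$), the lower bound $\sri\ge\Delta/|E|$ from Proposition~\ref{prop:delta-lower}, and the upper bound from Proposition~\ref{prop:bundle-bounds} inserted into the Fekete infimum. Your explicit $t=\lceil\sqrt N\,\rceil$ is a concrete realization of the paper's infimum over $t$, and your strict choice $k>1/\alpha$, with $\alpha=1$ handled separately by $\ell_1=N-4$, is slightly more careful than the paper's condition $k\alpha\ge 1$: under that condition, when $k\alpha=1$ the rounding can leave $\ell_1$ smaller than the other entries.
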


\begin{proof}
We treat the cases $\alpha>0$ and $\alpha=0$ separately.
\setcounter{case}{0}
\begin{case}
$\alpha\in(0,1]$.    
\end{case}

Choose an integer $k\ge 2$ such that $k\alpha \geq 1$. For each sufficiently large $N$, let
$s_N=N-2k$ and 
$\ell_1=\lfloor \alpha s_N\rfloor$.
Since $k\alpha \geq 1$, we have $s_N-\ell_1\le (k-1)\ell_1$ for all sufficiently large $N$. Hence
one can choose integers
\[
\ell_1\ge \ell_2\ge \cdots\ge \ell_k\ge 0
\qquad\text{with}\qquad
\sum_{i=1}^k\ell_i=s_N.
\]
Set
$G_N=B(\ell_1,\dots,\ell_k)$.
Then $G_N$ is connected and bipartite, $|V(G_N)|=N$, and by \eqref{eq:basic-params},
\[
|E(G_N)|=N+k(k-2)=N+O_{\alpha}(1),
\qquad
\Delta(G_N)=k+\ell_1=\alpha N+O_{\alpha}(1).
\]

By Proposition~\ref{prop:delta-lower}, we have 
\[
\sri(G_N)\ge \frac{k+\ell_1}{N+k(k-2)}=\alpha+O_{\alpha}(N^{-1}).
\]
On the other hand, Corollary~\ref{cor:fekete-reduction} and
Proposition~\ref{prop:bundle-bounds} give, for every integer $t\ge 1$,
\[
\sri(G_N)
\le
\frac{N+(t-1)\ell_1+(k+t-1)^2-2k}{t\bigl(N+k(k-2)\bigr)}.
\]
Since $k$ is fixed and $\ell_1=\alpha N+O_{\alpha}(1)$, the right-hand side is
\[
\alpha+O_{\alpha}\!\left(\frac{1}{t}+\frac{t}{N}\right),
\]
uniformly for all sufficiently large $N$ and all integers $t\ge 1$. Taking the infimum over
all integers $t\ge 1$ and using
\[
\inf_{t\ge 1}\left(\frac{1}{t}+\frac{t}{N}\right)=O(N^{-1/2}),
\]
we obtain
$\sri(G_N)\le \alpha+O_{\alpha}(N^{-1/2})$.
Combining the lower and upper bounds gives $\sri(G_N)\to\alpha$.

\begin{case}
$\alpha=0$.   
\end{case}

For each sufficiently large $N$, let
$q=\lfloor \sqrt N\rfloor$,
$s_N=N-2q$,
and write
$s_N=aq+r
~(0\le r<q)$.
Define
$\ell_1=\cdots=\ell_r=a+1$,
$\ell_{r+1}=\cdots=\ell_q=a$,
and let
$G_N:=B(\ell_1,\dots,\ell_q)$.
Then $G_N$ is connected and bipartite and has $|V(G_N)|=N$. Moreover,
\[
\ell_1\le \left\lceil\frac{N-2q}{q}\right\rceil +1\le \sqrt N+2,
\]
so by \eqref{eq:basic-params},
$|E(G_N)|=N+q(q-2)\le 2N$ and 
$\Delta(G_N)=q+\ell_1\le 2\sqrt N+2$.
Again Corollary~\ref{cor:fekete-reduction} and
Proposition~\ref{prop:bundle-bounds} imply that for every integer $t\ge 1$,
\[
\sri(G_N)
\le
\frac{N+(t-1)\ell_1+(q+t-1)^2-2q}{t\bigl(N+q(q-2)\bigr)}.
\]
Since $q=\lfloor\sqrt N\rfloor$ and $\ell_1\le \sqrt N+2$, the right-hand side is
\[
O\!\left(\frac{1}{2t}+\frac{1}{2\sqrt N}+\frac{t}{2(N-\sqrt N)}\right),
\]
uniformly for all sufficiently large $N$ and $t$.
Hence $\sri(G_N)\to 0$.
\end{proof}

\begin{remark}\label{rem:tN-choice}
In the proof of Theorem~\ref{thm:connected-classification}, the estimates for $\sri(G_N)$
are obtained directly from Corollary~\ref{cor:fekete-reduction} by taking the infimum over
all integers $t\ge 1$. No auxiliary choice of $t$ depending on $N$ is needed.
\end{remark}

\begin{corollary}[Asymptotic connected answer to Problem~\ref{prob:EF1}]
\label{cor:connected-one}
There exists a family of connected bipartite graphs $(G_N)_{N\ge 4}$ with
\[
|V(G_N)|=N,
\qquad
\Delta(G_N) \leq N-2<N-1,
\qquad
\sri(G_N)\to 1.
\]
\end{corollary}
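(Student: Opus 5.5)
The natural route is to specialize Theorem~\ref{thm:connected-classification} to $\alpha=1$, and to check the bound $\Delta(G_N)\le N-2$ directly from the construction. Rather than quoting the $O_\alpha$ form, I would use an explicit member of the family $B(\ell_1,\dots,\ell_k)$ so the degree bound is transparent. Take $k=2$ and, for $N\ge 4$, set $\ell_1=N-4$ and $\ell_2=0$. Let $G_N=B(N-4,0)$: the graph $K_{2,2}=C_4$ with $N-4$ pendant edges attached to one vertex of one part. By \eqref{eq:basic-params}, $G_N$ is connected and bipartite, with
\[
|V(G_N)|=N,\qquad |E(G_N)|=N+k(k-2)=N,\qquad \Delta(G_N)=k+\ell_1=N-2 .
\]
This settles the first two requirements for every $N\ge 4$.

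For the limit, the lower bound comes from Proposition~\ref{prop:delta-lower}:
\[
\sri(G_N)\ge \frac{\Delta(G_N)}{|E(G_N)|}=\frac{N-2}{N}\to 1 .
\]
The upper bound $\sri(G_N)\le 1$ is part of Corollary~\ref{cor:fekete-reduction}. Squeezing the two gives $\sri(G_N)\to 1$. With this choice no appeal to the upper estimate in Proposition~\ref{prop:bundle-bounds} is even needed.

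Alternatively, one can invoke Theorem~\ref{thm:connected-classification}(i) with $\alpha=1$ and $k=2$. There $\ell_1=\lfloor s_N\rfloor=N-4$ forces $\ell_2=0$, so the family produced is exactly the graph above. The only point to check is that the $O_\alpha(1)$ term in $\Delta(G_N)$ is at most $-2$, which holds because $\Delta=k+\ell_1=N-2$ exactly.

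There is no real obstacle. The only care needed is the small case $N=4$, where $G_4=C_4$ has $\Delta=2=N-2$ and is trivially fine, and making sure the trivial upper bound $\sri\le 1$ is cited rather than re-derived.
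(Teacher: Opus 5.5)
Your proof is correct and uses the same graph as the paper: the paper applies Case~1 of Theorem~\ref{thm:connected-classification} with $\alpha=1$, $k=2$, which forces $\ell_1=N-4$, $\ell_2=0$ and so produces exactly your $B(N-4,0)$. Your one improvement is to replace the upper estimate of Proposition~\ref{prop:bundle-bounds} with the trivial bound $\sri\le 1$ from Corollary~\ref{cor:fekete-reduction}, which suffices when $\alpha=1$. This also makes explicit that the construction works for every $N\ge 4$ (the theorem only says ``sufficiently large $N$'') and that $1-\sri(G_N)\le 2/N$, rather than the $O(N^{-1/2})$ error in the theorem.
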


\begin{proof}
Apply Theorem~\ref{thm:connected-classification} with $\alpha=1$. In the construction of
Case~1 one may take $k=2$, and then
$|E(G_N)|=N$ and 
$\Delta(G_N) \leq N-2.$
\end{proof}

\begin{remark}
Theorem~\ref{thm:connected-classification} gives a complete answer to the connected theory
naturally generated by Problem~\ref{prob:EF1}: every value in $[0,1]$ occurs as a limit of
$\sri(G_N)$ along connected $N$-vertex graphs, and this remains true inside the bipartite
category.
\end{remark}

\section{Problem~\ref{prob:EF2} beyond padding}

We now turn to the positive content behind Problem~\ref{prob:EF2}. The right condition is
not connectivity alone, but growth of the isolate-free core under a fixed maximum-degree
bound.

\begin{proposition}[Self-Ramsey reduction]\label{prop:self-ramsey}
Let $\nu(G)$ denote the maximum matching number of $G$. Then for every nonempty graph $G$,
\[
\sr(\nu(G)K_2,G)\le \sr(G,G),
\]
and consequently
\[
\sri(G)\le \frac{\sr(G,G)}{\nu(G)|E(G)|}.
\]
\end{proposition}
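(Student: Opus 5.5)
The plan is a monotonicity argument: any graph that arrows $(G,G)$ also arrows $(\nu(G)K_2,G)$. After that, the second inequality follows from the Fekete reduction.

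Let $F$ attain $\sr(G,G)$, so that $F\to(G,G)$ and $|E(F)|=\sr(G,G)$. Fix a red--blue coloring of $E(F)$. By the arrowing property there is either a red copy of $G$ or a blue copy of $G$. In the second case we are done. In the first case, $G$ contains a matching of size $\nu(G)$ by definition of $\nu$. The image of this matching under the embedding of the red copy is a red copy of $\nu(G)K_2$. Hence every coloring of $F$ contains a red $\nu(G)K_2$ or a blue $G$. Therefore $F\to(\nu(G)K_2,G)$, and so
\[
\sr(\nu(G)K_2,G)\le |E(F)|=\sr(G,G).
\]
This is just the general fact that $\sr(H',G)\le \sr(H,G)$ whenever $H'\subseteq H$, applied with $H'=\nu(G)K_2$ and $H=G$.

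For the consequence, $G$ is nonempty, so $\nu(G)\ge 1$. We may therefore apply the final inequality of Corollary~\ref{cor:fekete-reduction} with $t=\nu(G)$:
\[
\sri(G)\le \frac{\sr(\nu(G)K_2,G)}{\nu(G)|E(G)|}\le \frac{\sr(G,G)}{\nu(G)|E(G)|}.
\]

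There is no real obstacle. The only points to check are that $\nu(G)\ge1$, so that $t=\nu(G)$ is an admissible index in Corollary~\ref{cor:fekete-reduction}, and that $\sr(G,G)$ is finite. Finiteness holds because, by Ramsey's theorem, $K_m\to(G,G)$ for $m$ large enough.
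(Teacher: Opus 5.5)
Your proof is correct and follows essentially the same route as the paper: a minimum $F\to(G,G)$ also satisfies $F\to(\nu(G)K_2,G)$, since a red copy of $G$ contains a red $\nu(G)K_2$, and the second inequality is Corollary~\ref{cor:fekete-reduction} applied with $t=\nu(G)$. Your checks that $\nu(G)\ge 1$ and $\sr(G,G)<\infty$ are details the paper leaves implicit.
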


\begin{proof}
Let $F\to(G,G)$ with $|E(F)|=\sr(G,G)$. In any red--blue coloring of $F$, either there is a
blue copy of $G$, or there is a red copy of $G$. In the latter case, that red copy contains a
matching of size $\nu(G)$, hence a red copy of $\nu(G)K_2$. Therefore
$F\to(\nu(G)K_2,G)$,
which proves the first inequality. The second follows from
Corollary~\ref{cor:fekete-reduction},
\[
\sri(G)
\le \frac{\sr(\nu(G)K_2,G)}{\nu(G)|E(G)|}
\le \frac{\sr(G,G)}{\nu(G)|E(G)|}.
\]
\end{proof}

We shall use the following consequence of a theorem of
Kohayakawa, R\"odl, Schacht, and Szemer\'edi \cite[Theorem~1]{KRSS11}.

\begin{proposition}[A corollary of Kohayakawa--R\"odl--Schacht--Szemer\'edi]
\label{prop:KRSS}
For every fixed integer $\Delta\ge 2$ there exists a constant $C_{\Delta}>0$ such that every
$n$-vertex graph $H$ with $\Delta(H)\le \Delta$ satisfies
\[
\sr(H,H)\le C_{\Delta}\,n^{2-1/\Delta}(\log n)^{1/\Delta}.
\]
\end{proposition}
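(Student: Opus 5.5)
The plan is to read this off directly from \cite[Theorem~1]{KRSS11}. That theorem gives a \emph{universal} sparse Ramsey host: for every fixed $\Delta\ge 2$ there is a constant $c=c(\Delta)$ such that for every $n$ there exists a graph $\Gamma_n$ with
\[
|E(\Gamma_n)|\le c\,n^{2-1/\Delta}(\log n)^{1/\Delta}.
\]
The defining property is that every red--blue coloring of $E(\Gamma_n)$ has a color class containing a copy of \emph{every} graph on $n$ vertices with maximum degree at most $\Delta$, all in that one color. I would state this precisely first. The point to check is that the class in \cite{KRSS11} is ``maximum degree at most $\Delta$'', not ``exactly $\Delta$''. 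It is also worth noting that graphs on fewer than $n$ vertices are handled by adding isolated vertices.

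Given an $n$-vertex graph $H$ with $\Delta(H)\le\Delta$, fix any red--blue coloring of $E(\Gamma_n)$. By universality, one color class, say red or blue, contains a copy of every member of the class, and in particular a copy of $H$. Hence $\Gamma_n$ contains a red copy of $H$ or a blue copy of $H$, so $\Gamma_n\to(H,H)$. Therefore
\[
\sr(H,H)\le |E(\Gamma_n)|\le c\,n^{2-1/\Delta}(\log n)^{1/\Delta}.
\]
Since the universal statement in fact gives more than we need, the symmetric arrowing is immediate.

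The only real obstacle is bookkeeping at small $n$. The main theorem of \cite{KRSS11} may be stated for $n$ sufficiently large, and for $n=1$ the factor $(\log n)^{1/\Delta}$ vanishes. I would handle this as follows.
\begin{itemize}
\item For $n\le n_0(\Delta)$ there are only finitely many graphs $H$ up to isomorphism. Each nonempty one has finite $\sr(H,H)$; for instance $\sr(H,H)\le\binom{R}{2}$ with $R$ the classical Ramsey number $R(K_n,K_n)$.
\item For $2\le n\le n_0$, $\log n$ is bounded below by $\log 2>0$, so enlarging the constant to $C_\Delta:=\max\{c,\;\max_{2\le n\le n_0}\binom{R(K_n,K_n)}{2}/(\log 2)^{1/\Delta}\}$ covers all these cases.
\item The degenerate case $n=1$ (or edgeless $H$) is excluded, since $\sr$ is only considered for nonempty graphs in this paper. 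Nonempty $H$ forces $n\ge2$.
\end{itemize}
With this choice of $C_\Delta$ the inequality holds for every $n$-vertex $H$ with $\Delta(H)\le\Delta$.
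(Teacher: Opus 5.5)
Your proposal is correct and is essentially the paper's argument. The paper gives no separate proof and reads the bound off directly from Theorem~1 of Kohayakawa--R\"odl--Schacht--Szemer\'edi, just as you do via partition universality of the sparse host $\Gamma_n$, and your extra small-$n$ bookkeeping (absorbing finitely many cases into $C_\Delta$ using $\sr(H,H)\le\binom{R(K_n,K_n)}{2}$ and $\log n\ge\log 2$) correctly fills in a detail the paper leaves implicit.
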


\begin{theorem}[Core-growth form of Problem~\ref{prob:EF2}]
\label{thm:core-growth}
Let $c>1$ be fixed, and let $(G_N)$ be a family of nonempty graphs with
$|V(G_N)|=N$ and 
$\Delta(G_N)<c$.
Set
\[
M_N=|V(\core(G_N))|.
\]
If $M_N\to\infty$, then
$\sri(G_N)\to 0$.
More quantitatively, 
then there exists a constant $K_c>0$ such that
\[
\sri(G_N)\le K_c\left(\frac{\log M_N}{M_N}\right)^{1/c-1}
\qquad (N\ge 1).
\]
\end{theorem}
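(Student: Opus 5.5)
The plan is to pass to the core and then combine the self-Ramsey reduction (Proposition~\ref{prop:self-ramsey}) with the Kohayakawa--R\"odl--Schacht--Szemer\'edi bound (Proposition~\ref{prop:KRSS}).

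\emph{Reduction to the core.} By Corollary~\ref{cor:core}, $\sri(G_N)=\sri(H_N)$, where $H_N:=\core(G_N)$. This graph has $M_N$ vertices, no isolated vertices, and $\Delta(H_N)\le \Delta$, where $\Delta:=\lceil c\rceil-1$ is the largest integer below $c$ (so $\Delta<c$). We may assume $\Delta\ge 2$. If $\Delta=1$, then $H_N$ is a matching $mK_2$. A direct argument handles this case: the graph $(m+t-1)K_2$ arrows $(tK_2,mK_2)$. Hence $\sr(tK_2,mK_2)\le m+t-1$, and letting $t\to\infty$ gives $\sri(H_N)\le 1/m\le 2/M_N$, which is more than enough.

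\emph{Estimating the three quantities.} We need lower bounds on $|E(H_N)|$ and $\nu(H_N)$ in terms of $M_N$, and an upper bound on $\sr(H_N,H_N)$.
\begin{itemize}
\item Since $H_N$ has no isolated vertices, every vertex has degree at least $1$, so $|E(H_N)|\ge M_N/2$.
\item A maximal matching $\mathcal M$ has the property that its $2|\mathcal M|$ endpoints cover all edges. Each endpoint has degree at most $\Delta$, so $|E(H_N)|\le 2\Delta\,\nu(H_N)$. This gives $\nu(H_N)\ge M_N/(4\Delta)$.
\item Proposition~\ref{prop:KRSS} gives $\sr(H_N,H_N)\le C_\Delta M_N^{2-1/\Delta}(\log M_N)^{1/\Delta}$.
\end{itemize}

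\emph{Combining.} Inserting these three estimates into Proposition~\ref{prop:self-ramsey} yields
\[
\sri(G_N)\le \frac{C_\Delta M_N^{2-1/\Delta}(\log M_N)^{1/\Delta}}{(M_N/(4\Delta))(M_N/2)}
=8\Delta C_\Delta\left(\frac{\log M_N}{M_N}\right)^{1/\Delta}.
\]

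\emph{Matching the exponent in the statement.} Since $\Delta\le c-1$ when $c$ is an integer (and $\Delta<c$ in general), the ratio $(\log M_N)/M_N$ is at most $1$ once $M_N\ge 3$. Raising it to $1/\Delta$ is therefore at most raising it to any smaller positive power, and this produces the exponent $1/(c-1)$ claimed in the summary theorem. I read the displayed ``$1/c-1$'' as a typo for $1/(c-1)$.

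The finitely many small values of $M_N$ are absorbed into $K_c$ using $\sri\le 1$ from Corollary~\ref{cor:fekete-reduction}. The convergence $\sri(G_N)\to0$ then follows immediately when $M_N\to\infty$.

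\emph{Main obstacle.} The only delicate point is the bookkeeping around Proposition~\ref{prop:KRSS}, which is applied with the integer degree bound $\Delta$ rather than with $c$ itself. We need $\Delta\ge 2$ to apply it, and we need to confirm that the resulting exponent $1/\Delta$ dominates the stated exponent.
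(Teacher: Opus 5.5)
Your proposal is correct and is essentially the paper's proof. Both reduce to $H_N=\core(G_N)$, use $|E(H_N)|\ge M_N/2$ together with a linear lower bound on $\nu(H_N)$, and combine Proposition~\ref{prop:self-ramsey} with Proposition~\ref{prop:KRSS}. The differences are minor: you use a maximal-matching count $|E(H_N)|\le 2\Delta\,\nu(H_N)$ where the paper invokes Vizing's theorem, and you treat $\Delta=1$ separately, which cleanly covers the case $c=2$ that Proposition~\ref{prop:KRSS} (stated only for $\Delta\ge 2$) does not formally reach.

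One caveat, shared with the paper's step $\Delta(H_N)\le c-1$: your exponent comparison gives $1/(c-1)$ only when $c$ is an integer. For non-integer $c$ the argument yields only $1/(\lceil c\rceil-1)$, and the $1/(c-1)$ form genuinely fails. For example, with $c=3/2$ and $G_N$ a perfect matching, $\sri(G_N)=2/N$, so the statement should be read with $c$ an integer.
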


\begin{proof}
Let
$H_N=\core(G_N)$.
By Corollary~\ref{cor:core},
$\sri(G_N)=\sri(H_N)$.
Since $\Delta(G_N)<c$ and degrees are integers, we have
$\Delta(H_N)\le c-1.$
Moreover, $H_N$ has no isolated vertices. Hence every vertex of $H_N$ has degree at least
$1$, and therefore
\[
|E(H_N)|\ge \frac{M_N}{2}.
\]
By Vizing's theorem, the edge set of $H_N$ can be partitioned into at most $\Delta(H_N)+1$
matchings, so
\[
\nu(H_N)\ge \frac{|E(H_N)|}{\Delta(H_N)+1}\ge \frac{|E(H_N)|}{c}.
\]
Applying Proposition~\ref{prop:self-ramsey} and then Proposition~\ref{prop:KRSS}, we
obtain
\[
\begin{aligned}
\sri(G_N)=\sri(H_N)
&\le \frac{\sr(H_N,H_N)}{\nu(H_N)|E(H_N)|}\\
&\le \frac{c\sr(H_N,H_N)}{|E(H_N)|^2}\\
&\le \frac{c K_{c}M_N^{2-1/(c-1)}(\log M_N)^{1/(c-1)}}
{|E(H_N)|^2}.
\end{aligned}
\]
Using $|E(H_N)|\ge M_N/2$, we find
\[
\sri(G_N)
\le 4c K_{c}
\left(\frac{\log M_N}{M_N}\right)^{1/(c-1)}.
\]
This proves the quantitative estimate after renaming the constant, and the estimate tends to
$0$ because $M_N\to\infty$.
\end{proof}

\begin{corollary}[Connected bounded-degree families]\label{cor:connected-two}
Let $c>1$ be fixed, and let $(G_N)$ be a family of connected graphs with
$|V(G_N)|=N$ and 
$\Delta(G_N)<c$.
Then 
$\sri(G_N)\to 0$.
More quantitatively, one has
\[
\sri(G_N)=O_c\!\left(\left(\frac{\log N}{N}\right)^{1/(c-1)}\right).
\]
\end{corollary}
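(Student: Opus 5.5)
The corollary should be a direct specialization of Theorem~\ref{thm:core-growth}. My plan is to check that a connected family satisfies that theorem's hypotheses, with $M_N=N$.

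First I note that $c>1$ and the graphs are connected. For $N\ge 2$, a connected graph on $N$ vertices has at least one edge, so $G_N$ is nonempty. Connectivity also means $G_N$ has no isolated vertices. Hence $\core(G_N)=G_N$ and $M_N=|V(\core(G_N))|=N\to\infty$. The degenerate index $N=1$, where $G_1=K_1$ is empty and $\sri$ is undefined, I will simply exclude, since the family in the problem is indexed by $N>1$. Also, $\Delta(G_N)<c$ is exactly the theorem's degree hypothesis. So Theorem~\ref{thm:core-growth} applies, giving $\sri(G_N)\to 0$ together with its quantitative bound with $M_N$ replaced by $N$.

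The main obstacle is the exponent. The displayed statement of Theorem~\ref{thm:core-growth} writes $(\log M_N/M_N)^{1/c-1}$, but its proof actually establishes
\[
\sri(G_N)\le 4cK_c\left(\frac{\log M_N}{M_N}\right)^{1/(c-1)}.
\]
This matches Theorem~\ref{thm:summary}(c), so I will cite the bound as derived in the proof. Substituting $M_N=N$ then gives $\sri(G_N)=O_c\bigl((\log N/N)^{1/(c-1)}\bigr)$.

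One more point needs care: the proof uses Proposition~\ref{prop:KRSS} with $\Delta=c-1$, which requires $\Delta\ge 2$. If $\Delta(G_N)\le 1$, then a connected $G_N$ is $K_2$, which can occur for only one $N$ and does not affect an asymptotic statement. For non-integer $c$ or small maximum degree, I would apply Proposition~\ref{prop:KRSS} with $\Delta=\max\{2,\lceil c\rceil-1\}$. Here I note that the bound is monotone in the allowed degree, and I would verify that the intended exponent is consistent with how the theorem was stated. Apart from this bookkeeping, the corollary needs nothing beyond the observation $\core(G_N)=G_N$.
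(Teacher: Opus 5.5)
Your proposal is correct and is essentially the paper's own proof: connectivity (for $N\ge 2$) gives $\core(G_N)=G_N$, hence $M_N=N$, and Theorem~\ref{thm:core-growth}, read with the exponent $1/(c-1)$ that its proof actually establishes, gives the claim. One caution on your last paragraph: monotonicity in the degree works against you, because for non-integer $c$ taking $\Delta=\lceil c\rceil-1>c-1$ only yields the weaker exponent $1/(\lceil c\rceil-1)$; so, like the paper (whose step $\Delta(H_N)\le c-1$ tacitly uses integrality of $c$), you should simply take $c$ to be an integer for the quantitative bound.
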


\begin{proof}
Every connected nonempty graph has no isolated vertices, so $\core(G_N)=G_N$ and hence
$M_N=N$. The conclusion follows immediately from Theorem~\ref{thm:core-growth}.
\end{proof}

\begin{corollary}[Final resolution of the two Erd\H{o}s--Faudree problems]
\label{cor:final}
The two problems split sharply according to whether one allows padding by isolated vertices.
\begin{enumerate}[label={\rm (\roman*)}]
\item In their original literal form, Problem~\ref{prob:EF1} has a positive answer and
Problem~\ref{prob:EF2} has a negative answer.
\item In the connected theory generated by Problem~\ref{prob:EF1}, every value in $[0,1]$
occurs as a limit of $\sri(G_N)$ along connected bipartite $N$-vertex families; in particular,
one may have $\Delta(G_N)=N-2$ and still obtain $\sri(G_N)\to 1$.
\item In the strengthened form of Problem~\ref{prob:EF2}, fixed maximum degree together
with growth of the isolate-free core forces $\sri(G_N)\to 0$. In particular, every connected
bounded-degree family satisfies $\sri(G_N)\to 0$.
\end{enumerate}
\end{corollary}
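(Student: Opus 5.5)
The plan is to assemble Corollary~\ref{cor:final} from results already proved, one item at a time; no new argument should be needed.

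For item (i), cite Corollary~\ref{cor:literal}. The family $G_N=K_2\cup(N-2)K_1$ has $|V(G_N)|=N$ and $\Delta(G_N)=1<N-1$. By Lemma~\ref{lem:padding}, $\sri(G_N)=\sri(K_2)=1$ for every $N$, which answers Problem~\ref{prob:EF1} positively. The same family has $\Delta(G_N)=1<c$ for any fixed $c>1$, yet $\sri(G_N)\equiv 1\not\to 0$, which refutes Problem~\ref{prob:EF2}. The underlying reason is Corollary~\ref{cor:core}, and I would note this in passing.

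For item (ii), invoke Theorem~\ref{thm:connected-classification}: for each $\alpha\in[0,1]$ it gives connected bipartite graphs with $|V(G_N)|=N$ and $\sri(G_N)\to\alpha$. For the clause about $\Delta(G_N)=N-2$, use Corollary~\ref{cor:connected-one}, where $k=2$ and $\alpha=1$. I expect this clause to be the only point needing care. With $k=2$ one has $\Delta=2+\ell_1$ and $\ell_1+\ell_2=N-4$. To get $\Delta(G_N)=N-2$ exactly, I would take $\ell_1=N-4$ and $\ell_2=0$ directly, rather than $\ell_1=\lfloor N-4\rfloor$ through the general recipe. The resulting graph is $B(N-4,0)$, and I would check that it still has $\sri\to1$. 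Proposition~\ref{prop:delta-lower} gives
\[
\sri\ge \frac{N-2}{N}\to 1,
\]
and the upper bound $\sri\le 1$ comes from Corollary~\ref{cor:fekete-reduction}.

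For item (iii), apply Theorem~\ref{thm:core-growth}: fixed $c$ together with $M_N=|V(\core(G_N))|\to\infty$ gives $\sri(G_N)\to0$. The connected special case is Corollary~\ref{cor:connected-two}, since a connected nonempty graph has $\core(G_N)=G_N$ and hence $M_N=N$.
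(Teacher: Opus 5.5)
Your proposal is correct and takes the same approach as the paper: each item is read off from Corollary~\ref{cor:literal}, from Theorem~\ref{thm:connected-classification} with Corollary~\ref{cor:connected-one}, and from Theorem~\ref{thm:core-growth} with Corollary~\ref{cor:connected-two}. Your extra check that $B(N-4,0)$ has $\Delta=N-2$ and $|E|=N$, hence $\sri\to 1$, is sound and is exactly the paper's Case~1 construction with $\alpha=1$, $k=2$ (there $\ell_1=\lfloor s_N\rfloor=N-4$ already forces $\ell_2=0$). So it only makes explicit the equality $\Delta(G_N)=N-2$, which Corollary~\ref{cor:connected-one} states as an inequality.
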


\begin{proof}
Part~\textup{(i)} is Corollary~\ref{cor:literal}. Part~\textup{(ii)} follows from
Theorem~\ref{thm:connected-classification} and Corollary~\ref{cor:connected-one}.
Part~\textup{(iii)} is exactly Theorem~\ref{thm:core-growth} and
Corollary~\ref{cor:connected-two}.
\end{proof}

The paper therefore leaves a very sharp picture. The original Erd\H{o}s--Faudree questions
are easy only because isolated vertices are invisible to the matching size Ramsey numbers. The
first problem becomes rich and flexible in the connected setting, where every limit in $[0,1]$
can occur. The second problem becomes true as soon as the bounded-degree isolate-free core
is forced to grow, and connectivity is one particularly natural way to guarantee that growth.

\end{document}